\documentclass[12pt]{article}
\usepackage{amsmath,latexsym,amssymb}
\usepackage{amsthm}
\usepackage{enumerate}

\setlength{\oddsidemargin}{0.25in}
\setlength{\evensidemargin}{0.25in}
\setlength{\textwidth}{5.95in}
\setlength{\topmargin}{0in}
\setlength{\textheight}{8.0in}

\newcommand\NN{\mathbb{N}}
\newcommand\RR{\mathbb{R}}
\newcommand\ZZ{\mathbb{Z}}

\newcommand\BB{\mathcal{B}}

\newcommand\eps{{\varepsilon}}

\DeclareMathOperator\dist{dist}
 
\DeclareMathOperator\diam{diam}
\DeclareMathOperator\defin{def}

\newtheorem{theorem}{Theorem}
\newtheorem{definition}[theorem]{Definition}

\newtheorem{lemma}[theorem]{Lemma}

\newtheorem{proposition}[theorem]{Proposition}

\newcommand{\address}{Address: Department of Mathematics, University of North Texas, 1155 Union Circle \#311430, Denton, TX 76203-5017, USA; E-mail: allaart@unt.edu}

\title{Correction and strengthening of ``How large are the level sets of the Takagi function?"}
\author{Pieter C. Allaart \footnote{\address}}

\begin{document}

\maketitle

\begin{abstract}

The purpose of this note is to correct an error in an earlier paper by the author about the level sets of the Takagi function ({\em Monatsh. Math.} {\bf 167} (2012), 311-331), and to prove a stronger form of one of the main results of that paper.

\bigskip
{\it AMS 2000 subject classification}: 26A27 (primary); 54E52 (secondary)

\bigskip
{\it Key words and phrases}: Takagi's function, Nowhere-differentiable function, Level set, Local level set, Residual set, Perfect set
\end{abstract}

Takagi's continuous nowhere differentiable function is defined by
\begin{equation}
T(x)=\sum_{n=0}^\infty \frac{1}{2^n}\phi(2^n x),
\label{eq:Takagi-def}
\end{equation}
where $\phi(x)=\dist(x,\ZZ)$, the distance from $x$ to the nearest integer. In \cite{Allaart}, a number of results were proved about the level sets
\begin{equation}
L(y):=\{x\in[0,1]: T(x)=y\}, \qquad y\in\RR.
\end{equation}
The purpose of this note is two-fold: First, to point out an error in \cite{Allaart}, which leads to the loss of some statements pertaining to the set-theoretic complexity of certain sets. A few other results require new proofs, which are given here; Second, to strengthen the last main result of \cite{Allaart} concerning the number of local level sets contained in a `typical' level set.

\section{Preliminaries}

We first recall some definitions and lemmas from \cite{Allaart}. To avoid confusion with cross-referencing, definitions, lemmas, etc. in this note are labeled by a single number. Thus, for instance, ``Proposition \ref{lem:bijection}" refers to Proposition \ref{lem:bijection} of the present note, whereas ``Theorem 4.6" refers to a theorem in \cite{Allaart}. We shall let $\NN$ denote the set of positive integers, and $\ZZ_+$ the set of nonnegative integers.

We write the binary expansion of $x\in[0,1)$ as
\begin{equation*}
x=\sum_{n=1}^\infty \frac{\eps_n}{2^n}=0.\eps_1\eps_2\dots\eps_n\dots, \qquad\eps_n\in\{0,1\},
\end{equation*}
choosing the representation ending in all zeros if $x$ is dyadic rational. For $k\in\ZZ_+$ and $x$ written as above, let
\begin{equation*}
D_k(x):=\sum_{j=1}^k(-1)^{\eps_j}.
\end{equation*}
Define an equivalence relation on $[0,1)$ by
\begin{equation*}
x\sim x'\quad \stackrel{\defin}{\Longleftrightarrow}\quad |D_j(x)|=|D_j(x')|\ \mbox{for each $j\in\NN$}.
\end{equation*}

\begin{definition}[Lagarias and Maddock \cite{LagMad1}]
{\rm
The {\em local level set} containing $x\in[0,1)$ is defined by 
\begin{equation*}
L_x^{loc}:=\{x'\in [0,1): x'\sim x\}.
\end{equation*}
}
\end{definition}

It was shown in \cite{LagMad1} (see also \cite[Lemma 2.1]{Allaart}) that
\begin{equation}
x\sim x' \quad \Longrightarrow \quad T(x)=T(x').
\label{eq:equivalence}
\end{equation} 
Thus, each local level set is a subset of a level set. The local level set $L_x^{loc}$ is finite if $D_j(x)=0$ for only finitely many $j$, and is a Cantor set otherwise. (Lagarias and Maddock defined local level sets slightly differently, essentially treating them as subsets of the Cantor space $\{0,1\}^\NN$. Our definition here is simpler, and the difference in definitions does not affect which local level sets are finite, or how many local level sets are contained in a level set. See \cite[Remark 3.10]{Allaart} for more detail.)

\begin{definition}
{\rm
A dyadic rational of the form $x=0.\eps_1\eps_2\dots\eps_{2m}$ is called {\em balanced} of {\em order} $m$ if $D_{2m}(x)=0$. If there are exactly $n$ indices $1\leq j\leq 2m$ such that $D_j(x)=0$, we call $n$ the {\em generation} of $x$. By convention, we consider $x=0$ to be a balanced dyadic rational of generation $0$. For $n\in\ZZ_+$, the set of balanced dyadic rationals of generation $n$ is denoted by $\BB_n$, and we put $\BB:=\bigcup_{n=0}^\infty \BB_n$.
}
\end{definition}

Let
\begin{equation*}
\mathcal{G}_T:=\{(x,T(x)): 0\leq x\leq 1\}
\end{equation*}
denote the graph of $T$ over the unit interval $[0,1]$. The next result is Lemma 2.3 of \cite{Allaart}.

\begin{lemma} \label{lem:similar-copies}
Let $m\in\NN$, and let $x_0=k/2^{2m}$ be a balanced dyadic rational of order $m$. Then for $x\in[k/2^{2m},(k+1)/2^{2m}]$, we have
\begin{equation*}
T(x)=T(x_0)+\frac{1}{2^{2m}}T\left(2^{2m}(x-x_0)\right).
\end{equation*}
In other words, the part of $\mathcal{G}_T$ above the interval $[k/2^{2m},(k+1)/2^{2m}]$ is a similar copy of $\mathcal{G}_T$ itself, reduced by a factor $2^{2m}$ and shifted up by $T(x_0)$.
\end{lemma}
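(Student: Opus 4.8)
The plan is to exploit the self-similar structure of the Takagi function directly from its defining series, using the hypothesis that $x_0 = k/2^{2m}$ is balanced of order $m$. First I would split the defining sum (\ref{eq:Takagi-def}) for $T(x)$ at the index $2m$, writing $T(x) = \sum_{n=0}^{2m-1} 2^{-n}\phi(2^n x) + \sum_{n=2m}^\infty 2^{-n}\phi(2^n x)$. The key observation is that for $x$ in the dyadic interval $[k/2^{2m},(k+1)/2^{2m}]$, the high-frequency tail reassembles into a scaled copy of $T$ itself: reindexing the tail by $n = 2m + \ell$ gives $\sum_{\ell=0}^\infty 2^{-(2m+\ell)}\phi(2^\ell \cdot 2^{2m} x)$, and since $\phi$ has period $1$ and $2^{2m} x_0 = k \in \ZZ$, we have $\phi(2^\ell \cdot 2^{2m} x) = \phi(2^\ell \cdot 2^{2m}(x - x_0))$. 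Hence the tail equals exactly $2^{-2m} T\bigl(2^{2m}(x - x_0)\bigr)$, which is the claimed scaled term.

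The remaining work is to show that the low-frequency head $\sum_{n=0}^{2m-1} 2^{-n}\phi(2^n x)$ is constant on the interval and equals $T(x_0)$. Here I would argue that each $\phi(2^n x)$ for $0 \le n \le 2m-1$ is affine (linear) on the small interval $[k/2^{2m},(k+1)/2^{2m}]$, because $2^n x$ ranges over an interval of length $2^{n-2m} \le 2^{-1}$, so it cannot cross a half-integer where $\phi$ changes slope. Thus the head is itself affine on the interval, and I must check that its slope is zero. The slope of $\phi(2^n x)$ is $\pm 2^n$ according to whether the binary digit $\eps_{n+1}$ of $x_0$ is $0$ or $1$ (that is, $(-1)^{\eps_{n+1}} 2^n$), so the total slope of the head is $\sum_{n=0}^{2m-1}(-1)^{\eps_{n+1}} 2^n$. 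I would then relate this to the balancedness condition $D_{2m}(x_0) = \sum_{j=1}^{2m}(-1)^{\eps_j} = 0$.

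The main obstacle, and the step requiring the most care, is precisely reconciling these two sums: the derivative weights the digits by powers $2^n$, whereas the defining condition $D_{2m}(x_0)=0$ weights them equally. So the vanishing of $D_{2m}(x_0)$ does not immediately give vanishing slope. I expect the cleaner route is to avoid the derivative computation entirely and instead evaluate the head at the two endpoints $x = k/2^{2m}$ and $x = (k+1)/2^{2m}$ directly, or better, to appeal to the known functional equation for $T$ under the balanced-reflection structure. Concretely, I would use the identity $T(x) = T(x_0) + 2^{-2m}T(2^{2m}(x-x_0))$ evaluated at $x = x_0$ (where the tail vanishes since $T(0)=0$) to pin down the constant as $T(x_0)$, and then invoke affineness plus the value at the right endpoint $(k+1)/2^{2m}$, using $T$'s symmetry $T(t) = T(1-t)$ together with the balancedness of $x_0$ to force the head's increment across the interval to be zero. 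Verifying this endpoint cancellation cleanly is the crux; once the head is shown constant equal to $T(x_0)$, combining it with the tail computation yields the stated formula.
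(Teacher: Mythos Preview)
The paper does not prove this lemma here; it simply cites it as Lemma~2.3 of \cite{Allaart}. Your series-splitting approach is the standard one and is correct in outline, but you manufactured a nonexistent obstacle through an arithmetic slip.

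You correctly observe that on $[k/2^{2m},(k+1)/2^{2m}]$ the head $\sum_{n=0}^{2m-1}2^{-n}\phi(2^n x)$ is affine, and that the slope of $\phi(2^n x)$ is $(-1)^{\eps_{n+1}}2^n$. But you then forgot the prefactor $2^{-n}$: the $n$th summand is $2^{-n}\phi(2^n x)$, whose slope is $2^{-n}\cdot(-1)^{\eps_{n+1}}2^n=(-1)^{\eps_{n+1}}$. Hence the slope of the head is
\[
\sum_{n=0}^{2m-1}(-1)^{\eps_{n+1}}=\sum_{j=1}^{2m}(-1)^{\eps_j}=D_{2m}(x_0)=0,
\]
which is \emph{precisely} the balanced condition. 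There is no mismatch of weights to reconcile; the head is constant on the interval, and evaluating at $x=x_0$ (where the tail vanishes since $T(0)=0$) identifies that constant as $T(x_0)$. Combined with your correct tail computation, the lemma follows immediately. The endpoint/symmetry workaround you sketch is unnecessary.

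Two small technical caveats worth tightening. First, your justification for affineness (``length $\le 1/2$, so cannot cross a half-integer'') is not quite right as stated, since an interval of length $1/2$ can certainly contain a half-integer; the real reason is that $2^n k/2^{2m}$ lies in the grid $2^{-(2m-n)}\ZZ\supset\tfrac12\ZZ$, so no half-integer lies in the \emph{interior} of $[2^n k/2^{2m},2^n(k+1)/2^{2m}]$. Second, at the right endpoint $x=(k+1)/2^{2m}$ some $\phi(2^n x)$ may have a kink, but the identity extends there by continuity.
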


\begin{definition} \label{def:humps}
{\rm
For a balanced dyadic rational $x_0=k/2^{2m}$ as in Lemma \ref{lem:similar-copies}, define 
\begin{align*}
I(x_0)&:=[k/2^{2m},(k+1)/2^{2m}],\\
J(x_0)&:=T(I(x_0)),\\
H(x_0)&:=\{(x,T(x)): x\in I(x_0)\}.
\end{align*}
By Lemma \ref{lem:similar-copies}, $H(x_0)$ is a similar copy of $\mathcal{G}_T$; we call it a {\em hump}. If $D_j(x_0)\geq 0$ for every $j\leq 2m$, we call $H(x_0)$ a {\em leading hump}.
}
\end{definition}

We recall here that the range of $T$ is $[0,\frac23]$. Thus, if $x_0=k/2^{2m}$ is balanced of order $m$, then $\diam(J(x_0))=\frac23{(\frac14)}^m$. 

Next, define a subset $X^*$ of $[0,1]$ by
\begin{equation}
X^*:=[0,1]\backslash\bigcup_{x_0\in\BB_1}I(x_0).
\label{eq:Xstar}
\end{equation}
The importance of $X^*$ lies in the following result; see \cite[Proposition 3.2]{Allaart}.

\begin{proposition} \label{lem:bijection}
We have $T(X^*)=[0,\frac12]$, and $T$ is strictly increasing on $X^*\cap[0,\frac12)$.
\end{proposition}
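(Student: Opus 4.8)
The plan is to reduce to the interval $[0,\tfrac12]$ by the symmetry $T(1-x)=T(x)$ together with $X^*=1-X^*$, and then to analyze $T$ on $X^*\cap[0,\tfrac12]$ via a tilted form of the self-similarity in Lemma~\ref{lem:similar-copies}. First I would make $X^*$ explicit. Since $x_0\in\BB_1$ means the walk $\bigl(D_j(x_0)\bigr)_j$ first returns to $0$ at $j=2m$, the \emph{closed} hump $I(x_0)$ collects exactly the $x$ sharing this first-return prefix, together with its right endpoint; hence $x\in\bigcup_{x_0\in\BB_1}I(x_0)$ iff the walk $\bigl(D_k(x)\bigr)$ hits $0$ or $x$ is such a right endpoint. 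A short computation shows that every \emph{dyadic} $x$ with $D_k(x)\ge1$ for all $k$ is the right endpoint of a first-return hump, so
\begin{equation*}
X^*\cap[0,\tfrac12)=\{0\}\cup\bigl\{x\in(0,\tfrac12)\ \text{non-dyadic}:\ D_k(x)\ge1\ \text{for all }k\bigr\}.
\end{equation*}
A greedy estimate gives $X^*\cap[0,\tfrac12)\subseteq[0,\tfrac16]$ with supremum $\tfrac16$ attained, $T(\tfrac16)=\tfrac12$, and $T\le\tfrac12$ on this set, so already $T(X^*)\subseteq[0,\tfrac12]$.

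The analytic engine is a tilted refinement of Lemma~\ref{lem:similar-copies}: if $a=0.\eps_1\cdots\eps_{N-1}$ and $D_{N-1}(a)=d$, then for $x\in[a,a+2^{-(N-1)})$,
\begin{equation*}
T(x)=T(a)+d\,(x-a)+2^{-(N-1)}\,T\!\left(2^{N-1}(x-a)\right).
\end{equation*}
I would prove this exactly as Lemma~\ref{lem:similar-copies}, splitting the series \eqref{eq:Takagi-def} at index $N-1$: the tail is a scaled copy of $T$, while the head is affine with slope $\sum_{l=1}^{N-1}(-1)^{\eps_l}=d$, since no corner of any $\phi(2^n\cdot)$ with $n\le N-2$ falls in the open base interval.

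Monotonicity is the crux and the main obstacle. For $x<x'$ in $X^*\cap[0,\tfrac12)$ let $N$ be the first index at which they differ, so $\eps_N=0$, $\eps_N'=1$, and the common drift is $d=D_{N-1}(x)\ge2$. The displayed formula gives $T(x')-T(x)=2^{-(N-1)}\bigl[F_d(s')-F_d(s)\bigr]$, where $F_D(u):=Du+T(u)$ and $s\in[0,\tfrac12)$, $s'\in[\tfrac12,1)$ are the rescaled tails; crucially these again satisfy $D_k(s),D_k(s')\ge 1-d$, and a further application of the formula turns the comparison into one for $F_{D'}$ with $D'\ge1$. Thus everything rests on the inductive claim that, for each integer $D\ge1$, $F_D$ is strictly increasing along the non-dyadic points whose walk stays $\ge 1-D$. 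This is genuinely delicate: because $T$ descends arbitrarily steeply, the linear gain does not dominate near the ``folding point'' $\tfrac12$ — indeed two positive-walk points can share a $T$-value when one of them is a dyadic hump endpoint, which is precisely why those endpoints are excluded from $X^*$. I expect to close the induction by using that the constraint $D_k\ge 1-D$ keeps the competing tails a fixed distance $\sim 2^{-D}$ away from $\tfrac12$, so the steep oscillation cannot overturn the drift; this, fed into an induction on the scale of the first difference, should give both weak monotonicity on the closure and strictness on $X^*$ itself.

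Finally, surjectivity. By Lemma~\ref{lem:similar-copies} and $T(0)=T(1)=0$, the two endpoints of each hump carry the same $T$-value, and this common value propagates along any chain of abutting humps; hence across every complementary gap of the compact set $K:=\overline{X^*\cap[0,\tfrac12]}$ the two endpoint values coincide. A function that is continuous and monotone on $K$ and agrees at the two ends of every gap has image the whole interval $\bigl[T(0),T(\tfrac16)\bigr]=[0,\tfrac12]$. I would then verify that each such coincidence involves one of the excluded dyadic (hump-endpoint) points, so that deleting them to form $X^*$ preserves strict monotonicity while leaving every value of $[0,\tfrac12]$ attained on $X^*$; combined with the reverse inclusion from the first paragraph this yields $T(X^*)=[0,\tfrac12]$.
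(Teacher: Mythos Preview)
The present paper does not prove this proposition at all: it is quoted verbatim with the attribution ``see \cite[Proposition~3.2]{Allaart}'' and used as a black box. There is therefore no proof in this paper to compare your proposal against.

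Judged on its own terms, your outline has the right ingredients --- the explicit description of $X^*\cap[0,\tfrac12)$, the tilted self-similarity formula $T(x)=T(a)+d(x-a)+2^{-(N-1)}T\bigl(2^{N-1}(x-a)\bigr)$, and the endpoint-matching argument for surjectivity --- but the monotonicity step is not a proof, and you say so yourself (``I expect to close the induction\dots\ should give''). The reduction to the family $F_D(u)=Du+T(u)$ on points whose walk stays $\geq 1-D$ is correct, and one scaling step does send the problem to $F_{D+1}$ on the left half and $F_{D-1}$ on the right half with the walk constraint tracking accordingly. The difficulty is that this recursion is not well-founded on $D$: the left branch \emph{increases} $D$, so an induction ``on $D$'' cannot terminate, and you have not specified what the induction is actually on. Your heuristic that the constraint $D_k\geq 1-D$ keeps the tails a distance $\sim 2^{-D}$ from the fold and hence bounds the oscillation is the right intuition, but it needs to be turned into a concrete inequality (for instance, a uniform bound of the form $F_D(u)\leq F_D(1)$ on the constrained set, or a two-parameter induction on $D$ and on the dyadic scale of the first disagreement) before the argument closes. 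Until that is supplied, the crucial monotonicity claim remains a sketch.
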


For $x_0\in\BB_n$, define a subset $X^*(x_0)$ of $I(x_0)$ similarly by
\begin{equation*}
X^*(x_0):=I(x_0)\backslash \bigcup_{x_1\in\BB_{n+1}} I(x_1).
\end{equation*}
We call the graph of $T$ restricted to $X^*(x_0)$ a {\em truncated hump}, and denote it by $H^t(x_0)$.
Let $J^t(x_0)$ be the orthogonal projection of $H^t(x_0)$ onto the $y$-axis, so $J^t(x_0)=T(X^*(x_0))$. Proposition \ref{lem:bijection} and Lemma \ref{lem:similar-copies} imply that $J^t(x_0)$ is an interval; in particular, if $J(x_0)=[a,a+\frac23{(\frac14)}^m]$, then $J^t(x_0)=[a,a+\frac12{(\frac14)}^m]$.

\section{Correction and a new result}

Section 4 of \cite{Allaart} concerns in particular the sets
\begin{gather*}
S_\infty^{co}:=\{y: L(y)\ \mbox{is countably infinite}\},\\
S_\infty^{uc}:=\{y: L(y)\ \mbox{is uncountably infinite}\}.
\end{gather*}
Lemma 4.3 gives an explicit representation of $S_\infty^{uc}$. However, there is a logical flaw in lines 2-4 of its proof, and the author does not know at this point whether the lemma is true. The main impact on the results of \cite{Allaart} is that Theorem 4.6, which states that $S_\infty^{uc}$ is the union of a dense $G_\delta$ set and a countable set, can no longer be justified. The same is true for the conclusion, in Theorem 4.6 and Corollary 4.7, that the sets $S_\infty^{co}$ and $S_\infty^{uc}$ are ${\bf\Delta}_3^0$ in the Borel hierarchy. The author does not know whether these statements are true, or whether $S_\infty^{co}$ and $S_\infty^{uc}$ are even Borel sets. However, the explicit description of $S_\infty^{uc}$ is not needed to prove the following: 

\begin{theorem}
The set $S_\infty^{uc}$ is residual in the range of $T$.
\end{theorem}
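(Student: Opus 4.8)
The plan is to exhibit a dense $G_\delta$ subset of the range $[0,\frac23]$ all of whose points have an uncountable level set; since $[0,\frac23]$ is a complete metric space, the Baire category theorem then gives residuality of $S_\infty^{uc}$. The engine is the dichotomy recalled in the Preliminaries: $L_x^{loc}$ is a Cantor set as soon as $D_j(x)=0$ for infinitely many $j$, and since $L_x^{loc}\subseteq L(T(x))$, it suffices to arrange that $L(y)$ contains a single point $x$ whose walk $(D_j(x))_j$ returns to $0$ infinitely often.

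The mechanism producing such a point is an infinite nested chain of humps. Suppose $x_0^{(1)},x_0^{(2)},\dots$ are balanced dyadic rationals with $I(x_0^{(1)})\supseteq I(x_0^{(2)})\supseteq\cdots$ whose generations tend to infinity, and suppose $y\in\bigcap_k J(x_0^{(k)})$. Since the generation never exceeds the order $m_k$, we have $m_k\to\infty$, so $\diam J(x_0^{(k)})=\frac23(\frac14)^{m_k}\to 0$; as the $J(x_0^{(k)})$ are nested (Lemma \ref{lem:similar-copies}) and all contain $y$, their intersection is $\{y\}$ and the $I(x_0^{(k)})$ shrink to a single point $\xi$ with $T(\xi)=y$. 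Because $\xi$ shares its first $2m_k$ binary digits with $x_0^{(k)}$, we get $D_{2m_k}(\xi)=D_{2m_k}(x_0^{(k)})=0$ for every $k$, so the walk of $\xi$ returns to $0$ infinitely often and $L(y)$ is uncountable.

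It remains to manufacture a dense $G_\delta$ of such $y$. First I would establish a base density statement: the open intervals $\mathrm{int}(J(x_0))$, $x_0\in\BB$, are dense in $[0,\frac23]$, i.e.\ every subinterval contains one of them. This follows from Proposition \ref{lem:bijection}, which lets one approach any height in $(0,\frac12)$ by boundary points of humps of vanishing diameter, together with the self-similarity of Lemma \ref{lem:similar-copies} to reach heights in $[\frac12,\frac23]$. Next, working inside a fixed hump $H(x_0)$ and transporting this statement through the affine copy of Lemma \ref{lem:similar-copies}, I would select a countable family of strictly higher-generation sub-humps with pairwise disjoint projections whose interiors are nevertheless dense in $J(x_0)$, by a greedy packing that repeatedly drops a tiny hump-projection into the largest remaining gap. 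Iterating this selection builds a tree of humps in which siblings have disjoint projections and each child's projection lies inside its parent's; letting $U_k$ be the union of the (pairwise disjoint) projection interiors of the depth-$k$ nodes yields dense open sets with $U_{k+1}$ refining $U_k$. A point of $\bigcap_k U_k$ then lies in exactly one node at each depth, and disjointness of siblings forces these nodes to form a single nested chain of the type analyzed above, so $\bigcap_k U_k$ is a dense $G_\delta$ contained in $S_\infty^{uc}$.

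The main obstacle is precisely the disjoint-yet-dense selection inside a hump. Mere density of hump projections is routine, but to read off an \emph{honest} infinite nested chain (rather than merely arbitrarily long finite ones) from membership in $\bigcap_k U_k$, the sibling projections must be kept disjoint, so that the node containing a given $y$ is unique at each depth and the successive nodes are automatically nested. Verifying that one can simultaneously keep the chosen sub-humps disjoint, dense, and of strictly increasing generation is the delicate point, and it is where the geometry of the truncated humps $J^t(x_0)$ from Proposition \ref{lem:bijection} should do the real work.
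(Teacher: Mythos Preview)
Your approach is plausible and could likely be completed, but it is far more laborious than the paper's: the paper simply observes that $T$, being nowhere differentiable, is monotone on no interval, and then invokes Garg's theorem \cite[Theorem~1]{Garg}, which says that for \emph{any} continuous nowhere-monotone function the set of levels with perfect (hence uncountable) level set is residual in the range. That dispatches the statement in one line, with no reference to humps, balanced dyadic rationals, or the self-similar structure of $T$. Your construction, by contrast, exploits that structure to build an explicit dense $G_\delta$ of levels $y$ at which $L(y)$ contains a Cantor \emph{local} level set; so you would in fact be proving more than the stated theorem---something closer in spirit to Theorem~5.2 of \cite{Allaart} or even to Theorem~\ref{thm:uncountable-local} below. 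The ``delicate point'' you flag (packing pairwise disjoint sub-hump projections densely inside a given hump projection) is genuine but manageable: once density of hump projections is established, a greedy packing does work, and the global disjointness at each depth follows by induction since children's projections sit inside their (already disjoint) parents'. Still, all of this machinery is unnecessary for the bare residuality of $S_\infty^{uc}$, which Garg's theorem handles for every nowhere-monotone continuous function at once.
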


\begin{proof}
Since $T$ is nowhere differentiable, it is monotone on no interval. Thus, the result follows from Theorem 1 of Garg \cite{Garg}, which states that for {\em any} continuous function $f$ which is monotone on no interval, the level set at level $y$ is perfect (and hence uncountable) for a set of $y$-values residual in the range of $f$.
\end{proof}

Since Lemma 4.3 was used to prove Theorem 5.1, we provide here an alternative argument for this result. In what follows, let $l_y$ denote the horizontal line at level $y$. That is,
\begin{equation*}
l_y:=\{(x,y): x\in\RR\}.
\end{equation*}

\begin{theorem} 
Every uncountable level set contains an uncountable local level set.
\end{theorem}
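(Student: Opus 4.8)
My plan is to reduce the statement, via the finite/Cantor dichotomy recalled just after \eqref{eq:equivalence}, to a countability estimate on the ``finite'' part of the level set. Fix $y$ with $L(y)$ uncountable and split $L(y)=A\cup B$, where $A=\{x\in L(y):D_j(x)=0 \text{ for only finitely many } j\}$ and $B=\{x\in L(y):D_j(x)=0 \text{ for infinitely many } j\}$. For $x\in B$ the local level set $L_x^{loc}$ is a Cantor set, hence uncountable, and by \eqref{eq:equivalence} it is contained in $L(y)$; so it suffices to exhibit a single point of $B$. Since $L(y)=A\cup B$ is uncountable, this will follow once I show that $A$ is countable.

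To bound $A$, I would organize its points according to how many times, and where, the walk $D_j$ vanishes. Write $A=\bigcup_{n\geq 0}A_n$, where $A_n$ collects the $x\in L(y)$ for which $D_j(x)=0$ at exactly $n$ indices. If $x\in A_n$ and the last such index is $2m$, then the truncation of $x$ to its first $2m$ binary digits is a balanced dyadic rational $x_n\in\BB_n$, and $x$ lies in $X^*(x_n)$: indeed $x\in I(x_n)$ because it shares its first $2m$ digits with $x_n$, while $x$ avoids every child interval $I(x_{n+1})$ with $x_{n+1}\in\BB_{n+1}$, since entering one would create an $(n+1)$-st zero of $D_j$. Conversely, every point of $X^*(x_n)$ has exactly $n$ zeros of $D_j$. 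Hence $A_n=\bigcup_{x_n\in\BB_n}\bigl(L(y)\cap X^*(x_n)\bigr)$.

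The remaining point is that each fiber $L(y)\cap X^*(x_n)$ is finite. By Lemma \ref{lem:similar-copies} the graph over $I(x_n)$ is an affine copy of $\GG_T$, so $T$ restricted to $X^*(x_n)$ is an affine copy of $T$ restricted to $X^*$; by Proposition \ref{lem:bijection} together with the symmetry $T(x)=T(1-x)$, the latter is strictly monotone, hence injective, on each of the two halves $X^*\cap[0,\frac12)$ and $X^*\cap(\frac12,1]$, and is therefore at most two-to-one. Thus each $L(y)\cap X^*(x_n)$ has at most two points. Since $\BB=\bigcup_n\BB_n$ is countable, $A=\bigcup_n\bigcup_{x_n\in\BB_n}\bigl(L(y)\cap X^*(x_n)\bigr)$ is a countable union of finite sets, hence countable, and the proof is complete. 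I expect the main obstacle to be this final finiteness step: one must carefully transport the monotonicity of Proposition \ref{lem:bijection} through the self-similarity of Lemma \ref{lem:similar-copies} onto each truncated hump, and verify that $X^*(x_n)$ is exactly the set of points whose $D_j$-walk returns to $0$ precisely $n$ times, which is what links the geometric hump decomposition to the combinatorial equivalence relation defining local level sets.
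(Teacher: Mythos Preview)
Your proof is correct and follows essentially the same route as the paper's: both show that the set of points of $L(y)$ lying in finite local level sets is countable, whence an uncountable $L(y)$ must meet some Cantor local level set. The paper does this in one line by citing a bijection (established in the proof of Theorem~3.11 of \cite{Allaart}) between finite local level sets in $L(y)$ and truncated leading humps meeting $l_y$, whereas you unpack that bijection explicitly---decomposing $A$ over $\BB$ via the last return time of $D_j$ to zero and using Proposition~\ref{lem:bijection} to see that each truncated hump contributes at most two points---which makes your argument self-contained but otherwise identical in substance.
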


\begin{proof}
In the proof of Theorem 3.11, a bijection was constructed between the collection of finite local level sets in $L(y)$ and the collection of truncated leading humps which intersect $l_y$. Since there are only countably many truncated humps, it follows that each level set contains only countably many finite local level sets, so if a level set is uncountable then it must contain an uncountable local level set.
\end{proof}

We end this note with a strengthening of Theorem 5.2 of \cite{Allaart} about the propensity of level sets containing infinitely many local level sets. Let
\begin{gather*}
S_\infty^{loc}:=\{y: L(y)\ \mbox{\rm contains infinitely many distinct local level sets}\},\\
S_\infty^{loc,uc}:=\{y: L(y)\ \mbox{\rm contains {\em uncountably many} distinct local level sets}\}.
\end{gather*}
Lagarias and Maddock \cite{LagMad1} asked whether $S_\infty^{loc}$ is countable. 
Theorem 5.2 of \cite{Allaart} gives a negative answer to this question by showing that $S_\infty^{loc}$ is residual in $[0,\frac23]$, and $S_\infty^{loc,uc}$ is dense in $[0,\frac23]$ and intersects each subinterval of $[0,\frac23]$ in a continuum. Using Garg's theorem, we can now prove an even stronger result:

\begin{theorem} \label{thm:uncountable-local}
The set $S_\infty^{loc,uc}$ is residual in the range of $T$.
\end{theorem}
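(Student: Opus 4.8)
The plan is to combine Garg's theorem with a Baire category argument carried out \emph{inside} each level set, feeding in the already-established fact (Theorem 5.2 of \cite{Allaart}) that $S_\infty^{loc}$ is residual in $[0,\frac23]$. First I would set $A:=\{y:L(y)\text{ is perfect}\}$, which is residual in the range of $T$ by the same application of Garg's theorem used above. It then suffices to show that the ``bad'' set $B:=\{y:\text{some local level set has nonempty interior in }L(y)\}$ is meager, since then $A\setminus B$ is residual and, as explained next, contained in $S_\infty^{loc,uc}$.

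The key observation is that each local level set is closed (being either finite or a Cantor set) and is contained in the level set through it. Hence if $y\in A$ and $L(y)$ contained only countably many local level sets, then $L(y)$ would be a countable union of relatively closed subsets; since $L(y)$ is perfect, hence a nonempty complete metric space with no isolated points, the Baire category theorem applied \emph{within} $L(y)$ would force one of these local level sets $E$ to have nonempty interior in $L(y)$, i.e. $\emptyset\neq L(y)\cap V\subseteq E$ for some open $V\subseteq[0,1]$. Taking the contrapositive, $y\in A\setminus B$ implies $L(y)$ has uncountably many local level sets, so $A\setminus B\subseteq S_\infty^{loc,uc}$.

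It remains to prove that $B$ is meager, which is where the real work lies. Given $y\in B$ with witness $V$ and local level set $E$, I note that $L(y)\cap V$ is infinite (no isolated points), so $E$ is a Cantor local level set; consequently every point of $E$—in particular some $x^*\in L(y)\cap V$—has $D_j(x^*)=0$ for infinitely many $j$, these occurring at even indices since $D_j$ has the parity of $j$. Truncating $x^*$ at such an index $2m$ yields a balanced dyadic rational $x_0$ of order $m$ with $x^*\in I(x_0)$ and $\diam I(x_0)=2^{-2m}$, so $I(x_0)\subseteq V$ once $m$ is large. By Lemma \ref{lem:similar-copies} the hump $H(x_0)$ is a scaled copy of $\mathcal{G}_T$, and the bookkeeping $D_{2m+i}(x)=D_i(u)$ for $u=2^{2m}(x-x_0)$ (valid because $D_{2m}(x_0)=0$) shows that two points of $I(x_0)$ are $\sim$-equivalent precisely when their rescaled images are. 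Thus $L(y)\cap I(x_0)\subseteq E$ translates into the statement that the full Takagi level set $L(\tilde y)$, with $\tilde y=2^{2m}(y-T(x_0))$, lies in a single local level set, so in particular $\tilde y\notin S_\infty^{loc}$. Writing $\psi_{x_0}(t):=T(x_0)+2^{-2m}t$, this gives $B\subseteq\bigcup_{x_0\in\BB}\psi_{x_0}\bigl((S_\infty^{loc})^c\bigr)$. Since $S_\infty^{loc}$ is residual by Theorem 5.2 of \cite{Allaart}, each $(S_\infty^{loc})^c$ is meager, each $\psi_{x_0}$ is an affine homeomorphism onto the subinterval $J(x_0)$, and $\BB$ is countable; hence $B$ is meager and the theorem follows. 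I expect the only delicate points to be localizing the witnessing open piece inside an arbitrarily small balanced hump (which rests on $E$ being a Cantor set, so that $x^*$ has balanced prefixes of arbitrarily high order) and the self-similar bookkeeping that matches local level sets inside $I(x_0)$ with those of the rescaled graph.
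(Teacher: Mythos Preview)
Your argument is correct and genuinely different from the paper's. One small sloppiness: when you write ``Given $y\in B$ \ldots\ I note that $L(y)\cap V$ is infinite (no isolated points)'', the parenthetical uses that $L(y)$ is perfect, which is only guaranteed for $y\in A$. So what your argument actually establishes is $A\cap B\subseteq\bigcup_{x_0\in\BB}\psi_{x_0}\bigl((S_\infty^{loc})^c\bigr)$, not the stated inclusion for all of $B$. This is harmless, since you only need $A\setminus B$ to be residual, i.e.\ $A\cap B$ to be meager; but you should say so explicitly.

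The paper's proof proceeds quite differently: it fixes an explicit residual set $E=S_\infty^p\setminus(M\cup T(\mathcal{D}))$ and shows, by a case analysis on whether $L_x^{loc}$ is finite or a Cantor set, that for every $y\in E$ the intersection $L(y)\cap X_0$ (where $X_0=\{x:D_n(x)\geq 0\ \text{for all }n\}$) is itself perfect. Since distinct points of $X_0$ lie in distinct local level sets, this gives uncountably many local level sets directly. The key geometric input is Lemma~\ref{lem:truncated-hump} on truncated leading humps, which plays no role in your approach. Your route is more category-theoretic---Baire inside $L(y)$ plus self-similarity---and pleasantly short, but it leans on Theorem~5.2 of \cite{Allaart} (that $S_\infty^{loc}$ is residual) as a black box, whereas the paper's argument is self-contained and in fact re-proves that weaker statement along the way. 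Your method would also transplant more readily to other self-similar nowhere-monotone functions, provided the analogue of Theorem~5.2 is available.
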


The proof uses the following lemma, in which $M$ denotes the set of right endpoints of the intervals $J(x_0)$, $x_0\in\BB$.

\begin{lemma} \label{lem:truncated-hump}
\begin{enumerate}[(i)]
\item If $0\leq y<\frac23$, then the line $l_y$ intersects a truncated leading hump.
\item If $y\not\in M$ and $l_y$ intersects a leading hump $H$, then $l_y$ intersects a truncated leading hump contained in $H$.
\end{enumerate}
\end{lemma}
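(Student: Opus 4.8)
The plan is to prove (i) by exhibiting a concrete descending chain of leading humps whose truncated ranges exhaust $[0,\frac23)$, and then to deduce (ii) from (i) via the self-similarity of Lemma~\ref{lem:similar-copies}. The engine for (i) is the following structural claim: \emph{if $H(x_0)$ is a leading hump of order $m$ with $J(x_0)=[a,a+\frac23(\frac14)^m]$, then the ``uncovered top'' $\overline{J(x_0)\setminus J^t(x_0)}=[a+\frac12(\frac14)^m,\,a+\frac23(\frac14)^m]$ is exactly the range $J(x_1)$ of a leading hump $x_1$ of order $m+1$ with $I(x_1)\subset I(x_0)$.} Note the diameters already match: $\diam\overline{J(x_0)\setminus J^t(x_0)}=\frac16(\frac14)^m=\frac23(\frac14)^{m+1}$.

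I would verify the claim first in the base case $x_0=0$ by taking $x_1=1/4=0.01$. A direct check gives $D_1(x_1)=1$, $D_2(x_1)=0$, so $H(1/4)$ is leading of order $1$, and $T(1/4)=\frac12$ yields $J(1/4)=[\frac12,\frac23]=\overline{J(0)\setminus J^t(0)}$, since $J^t(0)=[0,\frac12]$. The general case follows by applying the similarity of Lemma~\ref{lem:similar-copies} to $H(x_0)$: the sub-hump corresponding to $1/4$ is obtained by prepending the digits of $x_0$ to those of $1/4$. Because the partial sums of $x_0$ are nonnegative and $D_{2m}(x_0)=0$, the appended block $01$ keeps all partial sums nonnegative, so this sub-hump is again leading, of order $m+1$, and its range is the scaled image of $[\frac12,\frac23]$, i.e. the uncovered top of $J(x_0)$.

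Granting the claim, I would prove (i) by descent. Fix $y\in[0,\frac23)$ and set $x_0^{(0)}=0$. If $y\in J^t(x_0^{(0)})$ we are done; otherwise $y$ lies in $J(x_0^{(0)})\setminus J^t(x_0^{(0)})$, which (since $y<\frac23$) is contained in the uncovered top, equal by the claim to $J(x_0^{(1)})$ for a leading hump of order $1$, and we repeat. Each stage preserves $y\in J(x_0^{(k)})$, and the ranges $J(x_0^{(k)})$ are nested with common right endpoint $\frac23$ and $\diam J(x_0^{(k)})=\frac23(\frac14)^k\to 0$, so $\bigcap_k J(x_0^{(k)})=\{\frac23\}$. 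As $y\neq\frac23$, the descent cannot continue forever; hence $y\in J^t(x_0^{(k)})$ for some $k$, and $l_y$ meets the truncated leading hump $H^t(x_0^{(k)})$.

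For (ii) I would rescale to reduce to (i). Let $H=H(x_0)$ be the given leading hump, of order $m$, so $J(x_0)=[T(x_0),T(x_0)+\frac23(\frac14)^m]$. Its top endpoint is the right endpoint of $J(x_0)$ and therefore lies in $M$; since $y\in J(x_0)$ and $y\notin M$, we get that $y$ is strictly below the top, i.e. $y':=2^{2m}(y-T(x_0))\in[0,\frac23)$. Applying (i) to $y'$ yields a leading hump $x_1'$ in the full graph with $y'\in J^t(x_1')$; pulling it back through Lemma~\ref{lem:similar-copies} (prepending the digits of $x_0$) gives a leading sub-hump $x_1$ with $I(x_1)\subset I(x_0)$ and $J^t(x_1)=T(x_0)+2^{-2m}J^t(x_1')\ni y$, so $H^t(x_1)\subseteq H$ is the desired truncated leading hump met by $l_y$. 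The substantive step throughout is the structural claim in (i)—recognizing that the part of a leading hump left uncovered by its truncation is precisely the range of one order-$(m+1)$ leading sub-hump (the one steering toward the global maximum $\frac23$); once this is established, both the descent and the self-similar reduction are routine, the only delicate point being that in (ii) the hypothesis $y\notin M$ is exactly what excludes the degenerate top level $y'=\frac23$ at which (i) would fail.
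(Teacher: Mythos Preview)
Your proof is correct and is essentially the paper's argument in a recursive guise: the descent in (i) generates precisely the paper's explicit sequence $x_n=\sum_{i=1}^n(1/4)^i$ (binary $0.01\,01\cdots01$), whose truncated ranges $J^t(x_n)$ tile $[0,\tfrac23)$, and your rescaling for (ii) via Lemma~\ref{lem:similar-copies} is exactly the paper's observation that the composition of similitudes carrying $\mathcal{G}_T$ to leading humps again yields a leading hump. The only cosmetic difference is that you package the induction step as a ``structural claim'' about the uncovered top of a leading hump, whereas the paper writes down the sequence and its truncated ranges directly.
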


\begin{proof}
Let $x_0=y_0=0$, and $x_n=\sum_{i=1}^n {(\frac14)}^i$, $y_n=\sum_{i=1}^n {(\frac12)}^i$ for $n\in\NN$. Then $x_n\in\BB$ and $D_j(x_n)\geq 0$ for each $j$, so $H(x_n)$ is a leading hump. Furthermore, a direct calculation or a look at the graph of $T$ reveals that $J^t(x_n)=[y_n,y_{n+1}]$. Since $\bigcup_{n=0}^\infty [y_n,y_{n+1}]=[0,\frac23)$, it follows that if $0\leq y<\frac23$, then $y\in J^t(x_n)$ for some $n$, and by Proposition \ref{lem:bijection} and Lemma \ref{lem:similar-copies}, $l_y$ intersects the truncated hump $H^t(x_n)$. This proves (i). Statement (ii) follows from (i) and the following, easily verified fact: If $H_1$ and $H_2$ are leading humps and $S_1,S_2:\RR^2\to\RR^2$ are orientation-preserving similitudes such that $S_i(\mathcal{G}_T)=H_i$ for $i=1,2$, then $H_3:=S_1\big(S_2(\mathcal{G}_T)\big)$ is also a leading hump. 
\end{proof}

\begin{proof}[Proof of Theorem \ref{thm:uncountable-local}]
Let $S_\infty^p:=\{y\in[0,\frac23]: L(y)\ \mbox{is a perfect set}\}$, let $\mathcal{D}$ be the set of dyadic rationals in $[0,1]$, and let $M$ be as in Lemma \ref{lem:truncated-hump}. Let 
\begin{equation*}
E:=S_\infty^p\backslash\big(M\cup T(\mathcal{D})\big). 
\end{equation*}
By Garg's theorem \cite[Theorem 1]{Garg}, $S_\infty^p$ is residual in $[0,\frac23]$, and since $M$ and $\mathcal{D}$ are countable, $E$ is residual in $[0,\frac23]$ as well. Let
\begin{equation*}
X_0:=\{x\in[0,1): D_n(x)\geq 0\ \mbox{for every}\ n\}.
\end{equation*}
We will show that
\begin{equation}
\mbox{for each}\ y\in E, L(y)\cap X_0\ \mbox{is a perfect set}.
\label{eq:perfect-sets}
\end{equation}
This will clearly yield the theorem, since it implies that $L(y)\cap X_0$ is uncountable for all $y\in E$, and different members of $X_0$ represent different local level sets.

Let $y\in E$, and let $x\in L(y)\cap X_0$. We must show that $x$ is a limit point of $L(y)\cap X_0$. To this aim, consider two cases:

\bigskip
{\em Case 1:} $L_x^{loc}$ is finite. Since $x\in L(y)$ and $y\in S_\infty^p$, there is a sequence $\{x_n\}$ in $L(y)$ such that $x_n\neq x$ for each $n$, and $x_n\to x$. Replacing this sequence with a subsequence if necessary, we may assume that $x,x_1,x_2,\dots$ all represent different local level sets, because $L_x^{loc}$ is finite and local level sets are closed. Since $y\in E$, $x\not\in \mathcal{D}$ and so for each $m\in\NN$ there is an integer $N_m$ such that the binary expansion of $x_n$ agrees with that of $x$ up to and including the $m$th digit for all $n\geq N_m$. For given $n$, let $x_n'$ be the point such that $D_j(x_n')=|D_j(x_n)|$ for every $j$; then $x_n'\in L(y)\cap X_0$ by \eqref{eq:equivalence}, $x_n'$ belongs to the same local level set as $x_n$, and if $n\geq N_m$ then $|x_n'-x_n|\leq 2^{-m}$ since $x\in X_0$. Hence, $x_n'\to x$, and clearly, $x_n'\neq x$ for each $n$.

\bigskip
{\em Case 2:} $L_x^{loc}$ is a Cantor set. Then $D_j(x)=0$ for infinitely many $j$, and for each $n\in\NN$ there is a balanced dyadic rational $x_n\in\BB_n$ in such a way that $I(x_{n+1})\subset I(x_n)$ for all $n$, and $\bigcap_{n=1}^\infty I(x_n)=\{x\}$. Since $x\in X_0$, $H(x_n)$ is a leading hump for each $n$. Since $y\not\in M$, Lemma \ref{lem:truncated-hump} implies that for each $n$, the line $l_y$ intersects a truncated leading hump $H_n^t$ contained in $H(x_n)$. By Proposition \ref{lem:bijection} and Lemma \ref{lem:similar-copies}, it does so in exactly two points; let $z_n$ be the leftmost of these. 
Then $z_n\in L(y)$, and $|z_n-x|\leq \diam(I(x_{n}))\to 0$, so $z_n\to x$. Furthermore, since $(z_n,y)$ is the leftmost point of $l_y\cap H_n^t$ and $H_n^t$ is a truncated leading hump, we have $z_n\in X_0$ and $D_j(z_n)=0$ for only finitely many $j$, so $z_n\neq x$.

\bigskip
We have shown in both cases that $x$ is a limit point of $L(y)\cap X_0$. Thus we have proved \eqref{eq:perfect-sets}, and the theorem.
\end{proof}

Theorem \ref{thm:uncountable-local} is quite remarkable, since the {\em average} number of local level sets contained in a level set (with respect to Lebesgue measure on the range of $T$) is $3/2$; see \cite{LagMad1} or \cite[Theorem 3.11]{Allaart}. This implies that at least `half' of all level sets consist of just one local level set. Yet the `typical' level set contains uncountably many local level sets.

\section*{Acknowledgments}
The author is grateful to the referee of \cite{Allaart2} for pointing out Garg's paper \cite{Garg}.

\footnotesize

\end{document}